\documentclass[11pt]{article}
\usepackage{amsmath,amsthm,amsfonts,amssymb,enumerate,graphicx,float,calc}
\usepackage{algorithm}
\usepackage{algorithmic}   
 \floatname{algorithm}{Algorithm} 
\usepackage[colorlinks=true,citecolor=black,linkcolor=black,urlcolor=blue]{hyperref}

\usepackage[lmargin=31mm,rmargin=31mm,bmargin=31mm,tmargin=31mm]{geometry}
\usepackage[numbers,sort&compress]{natbib}
\sloppy

\DeclareMathOperator{\depth}{depth}
\DeclareMathOperator{\blah}{col}

\renewcommand{\baselinestretch}{1.088}
\setlength{\footnotesep}{\baselinestretch\footnotesep}
\renewcommand{\thefootnote}{\fnsymbol{footnote}}	
\setlength{\parindent}{0cm}
\setlength{\parskip}{2ex}
\allowdisplaybreaks

\newcommand{\arXiv}[1]{arXiv:\,\href{http://arxiv.org/abs/#1}{#1}}
\newcommand{\msn}[1]{MR:\,\href{http://www.ams.org/mathscinet-getitem?mr=MR#1}{#1}}
\newcommand{\MSN}[2]{MR:\,\href{http://www.ams.org/mathscinet-getitem?mr=MR#1}{#1}}
\newcommand{\doi}[1]{doi:\,\href{http://dx.doi.org/#1}{#1}}

\theoremstyle{plain}
\newtheorem{theorem}{Theorem}
\newtheorem{lemma}[theorem]{Lemma}
\newtheorem{corollary}[theorem]{Corollary}

\theoremstyle{definition}

\begin{document}

\author{ Vida Dujmovi{\'c}\,\footnotemark[2] \qquad Fabrizio
  Frati\,\footnotemark[3] \qquad Gwena\"el Joret\,\footnotemark[4]
  \qquad David~R.~Wood\,\footnotemark[5]}

\footnotetext[2]{School of Computer Science, Carleton University,
  Ottawa, Canada (\texttt{vida@scs.carleton.ca}). Supported by NSERC
  and an Endeavour Fellowship from the Australian Government.}

\footnotetext[3]{School of Information Technologies, The University of
  Sydney, Sydney, Australia (\texttt{brillo@it.usyd.edu.au}).}

\footnotetext[4]{D\'epartement d'Informatique, Universit\'e Libre de
  Bruxelles, Brussels, Belgium
  (\texttt{gjoret@ulb.ac.be}). Postdoctoral Researcher of the Fonds
  National de la Recherche Scientifique (F.R.S.--FNRS). Supported by
  an Endeavour Fellowship from the Australian Government.}

\footnotetext[5]{Department of Mathematics and Statistics, The
  University of Melbourne, Melbourne, Australia
  (\texttt{woodd@unimelb.edu.au}). Supported by a QEII Fellowship and
  a Discovery Project from the Australian Research Council.}

\title{\boldmath\bf Nonrepetitive Colourings of Planar Graphs\\ with $O(\log
  n)$ Colours}
\maketitle


\begin{abstract}
  A vertex colouring of a graph is \emph{nonrepetitive} if there is no
  path for which the first half of the path is assigned the same
  sequence of colours as the second half. The \emph{nonrepetitive
    chromatic number} of a graph $G$ is the minimum integer $k$ such
  that $G$ has a nonrepetitive $k$-colouring. Whether planar graphs
  have bounded nonrepetitive chromatic number is one of the most
  important open problems in the field. Despite this, the best known
  upper bound is $O(\sqrt{n})$ for $n$-vertex planar graphs. We prove
  a $O(\log n)$ upper bound.
\end{abstract}

\section{Introduction}

\renewcommand{\thefootnote}{\arabic{footnote}}


A vertex colouring of a graph is \emph{nonrepetitive} if there is no
path for which the first half of the path is assigned the same
sequence of colours as the second half.  More precisely, a
$k$-\emph{colouring} of a graph $G$ is a function $\psi$ that assigns
one of $k$ colours to each vertex of $G$.  A path
$(v_1,v_2,\dots,v_{2t})$ of even order in $G$ is \emph{repetitively} coloured by
$\psi$ if $\psi(v_i)=\psi(v_{t+i})$ for all
$i\in[1,t]:=\{1,2,\dots,t\}$. A colouring $\psi$ of $G$ is
\emph{nonrepetitive} if no path of $G$ is repetitively coloured by
$\psi$. Observe that a nonrepetitive colouring is \emph{proper}, in
the sense that adjacent vertices are coloured differently. The
\emph{nonrepetitive chromatic number} $\pi(G)$ is the minimum integer
$k$ such that $G$ admits a nonrepetitive $k$-colouring.

The seminal result in this field is by \citet{Thue06}, who in 1906
proved
that every path is nonrepetitively 3-colourable. Nonrepetitive
colourings have recently been widely studied
\citep{HJ-DM11,Currie-EJC02,HJSS,PZ09,BreakingRhythm,BC11,BaratWood-EJC08,BV-NonRepVertex07,BK-AC04,GKM11,BGKNP-NonRepTree-DM07,Grytczuk,Currie-TCS05,Manin,Rampersad,NOW,KP-DM08,FOOZ,Pegden11,BV-NonRepEdge08,CG-ENDM07,Grytczuk-EJC02,AGHR-RSA02,GPZ,
  JS09,DS09}; see the surveys
\citep{Grytczuk,Grytczuk-DM08,Gryczuk-IJMMS07,CSZ}.  A number of graph
classes are known to have bounded nonrepetitive chromatic number. In
particular, trees are nonrepetitively 4-colourable
\citep{BGKNP-NonRepTree-DM07,KP-DM08}, outerplanar graphs are
nonrepetitively $12$-colourable \citep{KP-DM08,BV-NonRepVertex07}, and
more generally, every graph with treewidth $k$ is nonrepetitively
$4^k$-colourable \citep{KP-DM08}. Graphs with maximum degree $\Delta$
are nonrepetitively $O(\Delta^2)$-colourable
\citep{AGHR-RSA02,Grytczuk,Gryczuk-IJMMS07,HJ-DM11}.

Perhaps the most important open problem in the field of nonrepetitive
colourings is whether planar graphs have bounded nonrepetitive
chromatic number. This question, first asked by \citet{AGHR-RSA02},
has since been mentioned by numerous authors
\citep{Grytczuk,Grytczuk-DM08,Gryczuk-IJMMS07,KP-DM08,GPZ,BV-NonRepVertex07,BreakingRhythm,HJ-DM11,Grytczuk-EJC02,NOW,CG-ENDM07,DS09}. It
is widely known that $\pi(G)\in O(\sqrt{n})$ for $n$-vertex planar
graphs\footnote{One can prove this bound using a naive application of
  the Lipton-Tarjan planar separator theorem.}, and this is the best
known upper bound. The best known lower
bound is $11$, due to Pascal
  Ochem; see Appendix~\ref{LowerBound}. Here we prove a
logarithmic upper bound.

\begin{theorem}
  \label{NonRepPlanar}
  For every planar graph $G$ with $n$ vertices,$$\pi(G)\leq
  8(1+\log_{3/2}n)\enspace.$$
\end{theorem}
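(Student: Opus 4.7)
The plan is to proceed by induction on $n$, exploiting that the target bound $f(n) := 8(1+\log_{3/2}n)$ satisfies the recurrence $f(n) = f(2n/3) + 8$. This suggests a standard ``separator + recurse'' scheme in which each level of the recursion shaves the problem size by a factor $3/2$ and spends a fresh budget of $8$ colours on the separator. The base case ($n$ small enough, say $n \le 1$) is immediate.

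For the inductive step, I would apply a Lipton--Tarjan-style separator theorem to a BFS tree $T$ of $G$ rooted at an arbitrary vertex $r$. Classically, one obtains a separator $S\subseteq V(G)$ that is a union of a bounded number of root-to-vertex paths in $T$, and such that every component of $G-S$ has at most $2n/3$ vertices. The crucial extra feature needed here is that each such path is a \emph{geodesic} of $G$ (since any root-to-vertex path in a BFS tree is a shortest path), so $S$ inherits strong distance constraints from $G$. In addition, $S$ should admit a nonrepetitive colouring using $8$ colours that is ``rigid enough'' to resist attacks by paths weaving in and out of $S$.

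Given such a separator, I would recursively colour each component $C$ of $G-S$ using $f(2n/3) = 8\log_{3/2}n$ colours drawn from a palette disjoint from the $8$ colours assigned to $S$. The total number of colours is then $8\log_{3/2}n + 8 = f(n)$, matching the target.

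The main obstacle will be verifying nonrepetitiveness of the combined colouring. Suppose $P=(v_1,\ldots,v_{2t})$ were repetitively coloured. Disjointness of palettes forces $v_i\in S$ iff $v_{t+i}\in S$: if $P\cap S=\emptyset$ then $P$ sits in a single component and the inductive hypothesis is contradicted. When $P$ meets $S$, one must exploit the geodesic structure: any maximal subpath of $P$ running through a component of $G-S$ from one $S$-vertex $u$ to another $S$-vertex $v$ has length at least $d_G(u,v)$, which equals the corresponding distance along $S$ since $S$ is built from geodesics. The aim is to ``project'' $P$ onto $S$ to obtain a repetitively coloured walk -- and ultimately a repetitively coloured path -- inside $S$, contradicting the nonrepetitive $8$-colouring of $S$. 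Making this projection argument rigorous, and in particular choosing an $8$-colouring of $S$ with sufficient structure (most likely encoding both a Thue-type colouring along each geodesic and a marker identifying which geodesic is used) so that the projection preserves repetitions, is the technical heart of the proof and the step I expect to be most delicate.
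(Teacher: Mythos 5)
Your proposal gets the arithmetic skeleton right: the target bound $f(n)=8(1+\log_{3/2}n)$ does satisfy $f(n)=f(2n/3)+8$, and the paper does use a recursive BFS-based decomposition with separators built from geodesics (in the paper's language, ``lollipops'' consisting of two monotone root-to-vertex paths in a plane triangulation). However, the colouring scheme you propose and the resulting nonrepetitiveness argument diverge from the paper at precisely the step you flag as the technical heart, and that step as sketched has a genuine gap.

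The problem with the disjoint-palettes plan is that the restriction of a repetitively coloured path $P=(v_1,\dots,v_{2t})$ to the separator $S$ is neither a path nor a walk in $G[S]$; it is merely a subsequence of vertices with matching colours in the two halves, joined by excursions through components of $G-S$. A nonrepetitive colouring of $G[S]$, however cleverly built, says nothing about such a structure. The geodesic property does lower-bound the \emph{length} of each excursion by the distance between its endpoints along $S$, but excursions can be far longer than necessary, so it is not clear how to convert the trace on $S$ into a repetitively coloured path inside $S$. You acknowledge this is delicate; in fact I do not see how to close it along these lines, and the paper does not attempt to.

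The paper instead avoids projection entirely by changing the colouring. Each vertex receives a triple: (i) one of $4$ colours from a lemma of K\"undgen and Pelsmajer which guarantees that in any repetitively coloured path the two halves have identical BFS-layer patterns; (ii) the recursion depth at which the vertex lands in a separator; and (iii) a label in $\{1,2\}$, which suffices because the lollipop separator meets each BFS layer in at most $2$ vertices of the current ``active'' set $B$. The palettes are \emph{not} disjoint across levels. Nonrepetitiveness then follows from a minimum-depth argument: among the pairs $(v_i,v_{t+i})$ of a supposed repetitively coloured path, pick one of minimum depth; same layer, depth, and label force these two vertices to have been processed in distinct recursive calls, and the subpath between them must therefore cross the separator of the least-common-ancestor call, producing a vertex of strictly smaller depth on the path, a contradiction. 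This replaces your ``project onto $S$ and use a rigid $8$-colouring of $S$'' step, which is the unresolved part of your plan, with a clean combinatorial argument and gives $4\cdot 2\cdot(1+\log_{3/2}n)=8(1+\log_{3/2}n)$ colours.
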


As a secondary contribution, we solve the above open problem when restricted to paths of bounded length.

\begin{theorem}
  \label{LocalPlanar}
  There is a constant $c$ such that, for every integer $k\geq 1$,
  every planar graph $G$ is $c^{k^2}$-colourable such that $G$
  contains no repetitively coloured path of order at most $2k$.
\end{theorem}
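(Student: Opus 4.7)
The plan is to combine a BFS layering with the Kündgen--Pelsmajer bound $\pi(G)\le 4^{\tw(G)}$ already recalled in the introduction. I would fix an arbitrary root of $G$ and take the BFS layering $V_0,V_1,V_2,\dots$ from it. Since adjacent vertices lie in equal or consecutive layers, the vertices of any path of order at most $2k$ occupy at most $2k$ consecutive layers.

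For each shift $s\in\{0,1,\dots,2k-1\}$, I would form a spanning subgraph $G^{(s)}$ of $G$ by deleting every edge joining $V_i$ to $V_{i+1}$ with $i+1\equiv s\pmod{2k}$. Each connected component of $G^{(s)}$ is then confined to a window of $2k$ consecutive BFS layers of the planar graph $G$. I would invoke the classical fact that a planar graph confined to $\ell$ consecutive BFS layers has treewidth $O(\ell)$ (derivable from Bodlaender's bound of $3\ell-1$ for $\ell$-outerplanar graphs); hence $\tw(G^{(s)})=O(k)$, and applying Kündgen--Pelsmajer yields a nonrepetitive colouring $\chi_s$ of $G^{(s)}$ using at most $c_1^{k}$ colours for some absolute constant $c_1$.

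Finally, I would define $\chi(v):=(\chi_0(v),\dots,\chi_{2k-1}(v))$, a colouring using at most $c_1^{2k^2}=c^{k^2}$ colours (taking $c:=c_1^{2}$). To verify this works, consider any path $P$ of order at most $2k$ in $G$: its vertices lie in some layers $V_a,\dots,V_b$ with $b-a\le 2k-1$, so the set $\{a+1,\dots,b\}$ has at most $2k-1$ elements and therefore omits at least one residue $s^{*}$ modulo $2k$. For this $s^{*}$ no edge of $P$ is deleted in passing to $G^{(s^{*})}$, so $P$ is still a path in $G^{(s^{*})}$, and $\chi_{s^{*}}$ being nonrepetitive on $G^{(s^{*})}$ already prevents $P$ from being repetitively coloured by $\chi$.

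The crux, and the only planar-specific step, is the $O(\ell)$ treewidth bound for a strip of $\ell$ consecutive BFS layers of a planar graph; everything else is combinatorial glue. The shift trick reduces the global problem to $2k$ subproblems on graphs of treewidth $O(k)$, and Kündgen--Pelsmajer handles each piece, so any difficulty will be in nailing down the explicit treewidth bound for BFS strips with a clean constant.
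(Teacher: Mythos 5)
Your proposal is correct, and its skeleton (BFS layering, reduce to strips of $2k$ layers, Kündgen--Pelsmajer on each strip, aggregate into a $2k$-tuple colouring, $c^{k^2}$ total) matches the paper's Theorem~\ref{thm:ExcludeApex}. But the way you build the strips is genuinely different: the paper uses \emph{overlapping} strips $G_i := G[V_i\cup\dots\cup V_{i+2k-1}]$ for every $i$, and defines the $j$-th coordinate of $\psi(v)$ by picking the unique strip $G_i$ with $i\equiv j\pmod{2k}$ that contains $v$; a path of order $\le 2k$ lives in at most $2k$ consecutive layers and hence in some $G_i$, and coordinate $j=i\bmod 2k$ recovers $\psi_i$ on that path. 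You instead delete one residue class of inter-layer edges to form \emph{disjoint} windows, and then locate a residue $s^*$ that the path's layer transitions avoid. Both arguments are clean; the paper's version has the modest advantage of generalising verbatim to apex-minor-free graphs (indeed it is stated at that generality, and Theorem~\ref{LocalPlanar} is the special case $H\in\{K_5,K_{3,3}\}$), whereas yours is pinned to planar via the outerplanarity bound.

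The one place you should tighten up is the treewidth step. A strip $G[V_i\cup\dots\cup V_{i+\ell-1}]$ of $\ell$ consecutive BFS layers is not itself $\ell$-outerplanar in general, so Bodlaender's $3\ell-1$ bound does not apply to it directly. The standard fix — and what the paper does explicitly — is to contract the connected subgraph $G[V_0\cup\dots\cup V_{i-1}]$ to a single vertex $r_i$; the contraction is still planar, every vertex of the strip is then at distance at most $\ell$ from $r_i$, and one invokes the radius-to-treewidth bound for planar graphs (equivalently the linear diameter-treewidth property of \citet{Eppstein-Algo00,DH-SODA04}). You flag this yourself as the remaining loose end, so it is an acknowledged gap rather than a hidden one, but it does need the contraction trick, not just a citation of Bodlaender.
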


Note that the case $k=2$ of Theorem~\ref{LocalPlanar} corresponds to
so-called \emph{star colourings}; that is, proper colourings with no
2-coloured $P_4$; see
\citep{Albertson-EJC04,FRR-JGT04,NesOdM-03,Wood-DMTCS05}. \citet{Albertson-EJC04}
proved that every planar graph is star colourable with 20 colours.

\section{Proof of Theorem~\ref{NonRepPlanar}}

A \emph{layering} of a graph $G$ is a partition $V_0,V_1,\dots,V_p$ of
$V(G)$ such that for every edge $vw\in E(G)$, if $v\in V_i$ and $w\in
V_j$ then $|i-j|\leq 1$. Each set $V_i$ is called a \emph{layer}.  The
following lemma by \citet{KP-DM08} will be useful.



\begin{lemma}[\cite{KP-DM08}] \label{Pattern} For every layering of a
  graph $G$, there is a (not necessarily proper) 4-colouring of $G$
  such
  that 
  for every repetitively coloured path $(v_1,v_2,\dots,v_{2t})$, the
  subpaths $(v_1,v_2,\dots,v_{t})$ and
  $(v_{t+1},v_{t+2},\dots,v_{2t})$ have the same layer pattern.
\end{lemma}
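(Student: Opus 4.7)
The plan is to construct a 4-colouring of the form $\phi(v) = (a(v), b(v)) \in \{0,1\}^2$, where the first coordinate $a(v) = \ell(v) \bmod 2$ encodes layer parity and the second coordinate $b(v)$ is derived from a spanning structure of $G$ adapted to the given layering. (Here $\ell(v) = i$ when $v \in V_i$.) As a preliminary, I would build a layered spanning forest $F$ by processing $V_0, V_1, \dots, V_p$ in order: each vertex $v \in V_i$ with $i \ge 1$ greedily picks a parent $p(v) \in V_{i-1}$ among its neighbours, or becomes a root if no such neighbour exists. The forest $F$ has the property that the depth of $v$ in its tree equals its layer-distance from the root, giving a canonical backbone along which to measure any path in $G$.

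The second coordinate $b(v)$ would be defined using $F$ and some auxiliary data, such as a parity accumulated along the root-to-$v$ path or a pre-fixed orientation of the edges of $G$ respecting the layering. The defining property I aim to bake in is that traversing any edge of $G$ produces a predictable change in the pair $(a,b)$, \emph{different in each of the three cases} ``horizontal'', ``up a layer'', and ``down a layer''. Granted this, for any repetitively coloured path $(v_1, \dots, v_{2t})$, equality of first coordinates yields $\ell(v_i) \equiv \ell(v_{t+i}) \pmod 2$ and thus $\delta_i \equiv \delta_{t+i} \pmod 2$ for the layer-steps $\delta_i := \ell(v_{i+1}) - \ell(v_i) \in \{-1,0,+1\}$, so horizontal edges on the first half correspond to horizontal edges on the second half. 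Equality of second coordinates should then upgrade these parity matches to $\delta_i = \delta_{t+i}$ (distinguishing up-steps from down-steps), and a base-case check at $i = 1$ must force $\ell(v_1) = \ell(v_{t+1})$, whence $\ell(v_i) = \ell(v_{t+i})$ for every $i$.

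The main obstacle--and the true technical heart of the argument--is to design $b(v)$ so that it simultaneously (i) distinguishes up-steps from down-steps along any path, and (ii) resolves the absolute starting layer when combined with $a$. Naive candidates (depth parity in $F$, position among siblings, layer parity of an ancestor, a Thue-like word indexed by layer alone, etc.) either duplicate the information already contained in $a(v)$ or fail to separate up-steps from down-steps. Finding a local, easily computable rule for $b$ with the right ``edge-traversal invariant'', and verifying that this local behaviour integrates correctly along every path in $G$, is where the most care is needed.
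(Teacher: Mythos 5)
Your proposal has a fatal structural flaw, and this is separate from (though related to) the fact that you never actually construct the second coordinate $b$.

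The flaw: you ask $b:V(G)\to\{0,1\}$ to ``distinguish up-steps from down-steps'' via its local change along edges. This is impossible for any vertex-valued bit $b$, regardless of how cleverly it is defined. For an edge $vw$ with $\ell(w)=\ell(v)+1$, traversing it ``up'' changes $b$ by $b(w)-b(v)\pmod 2$, while traversing it ``down'' (i.e.\ $w$ to $v$) changes $b$ by $b(v)-b(w)\equiv b(w)-b(v)\pmod 2$. So up and down produce literally the same change in $b$ modulo $2$, always. Your ``defining property'' in the second paragraph is therefore unachievable, and the conclusion $\delta_i=\delta_{t+i}$ you ``grant'' cannot be obtained this way. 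A second gap is the base case: even if one somehow had $\delta_i=\delta_{t+i}$ for all $i$, the offset $D:=\ell(v_{t+1})-\ell(v_1)$ is then some fixed even integer, and $\phi(v_1)=\phi(v_{t+1})$ with only four colours cannot by itself force $D=0$ (four colours cannot separate all even layer differences). No local, edge-by-edge invariant plus a single base-case check can close this; some genuinely global constraint is required.

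The cited proof of Kündgen and Pelsmajer takes a different route. The colouring is of the form $\phi(v)=c(\ell(v))$ where $c$ is a fixed $4$-colouring of $\mathbb{Z}_{\geq 0}$ built from Thue's square-free ternary sequence; no spanning forest or graph-dependent data is used (indeed, the spanning forest $F$ in your first paragraph plays no role, since a path of $G$ need not follow $F$). The required property of $c$ is that it be ``walk-nonrepetitive'': for every sequence $u_1,\dots,u_{2t}$ of nonnegative integers with $|u_{i+1}-u_i|\leq 1$, if $c(u_i)=c(u_{t+i})$ for all $i$ then $u_i=u_{t+i}$ for all $i$. (A path $(v_1,\dots,v_{2t})$ in a layered graph produces exactly such a sequence $u_i=\ell(v_i)$.) The proof that such a $c$ exists with four colours is global: one shows that a lazy walk which is repetitively coloured but does not satisfy $u_i=u_{t+i}$ would force a square in the underlying nonrepetitive sequence. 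This is the piece of content your proposal replaces with the impossible ``edge-traversal invariant'', and it is precisely the part you flag as unresolved; so the proposal does not constitute a proof, and the direction it points in cannot be made to work.
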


A \emph{separation} of a graph $G$ is a pair $(G_1,G_2)$ of subgraphs
of $G$, such that $G=G_1\cup G_2$. In particular, there is no edge of
$G$ between $V(G_1)-V(G_2)$ and $V(G_2)-V(G_1)$.

\begin{lemma}
  \label{General}
  Fix $\epsilon\in(0,1)$ and $c\geq 1$. Let $G$ be a graph with $n$
  vertices. Fix a layering $V_0,V_1,\dots,V_p$ of $G$. Assume that,
  for every set $B\subseteq V(G)$, there is a separation $(G_1,G_2)$
  of $G$ such that:

  \vspace*{-3ex}
  \begin{itemize}
  \item each layer $V_i$ contains at most $c$ vertices in $V(G_1)\cap
    V(G_2)\cap B$, and
  \item both $V(G_1)-V(G_2)$ and $V(G_2)-V(G_1)$ contain at most
    $(1-\epsilon)|B|$ vertices in $B$.
  \end{itemize}

  \vspace*{-3ex} Then $\pi(G)\leq 4c(1+\log_{1/(1-\epsilon)}n)$.
\end{lemma}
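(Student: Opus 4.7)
The plan is to build a recursive decomposition of $G$ whose nodes are labelled by subsets $B\subseteq V(G)$, starting from $B=V(G)$ and obtained by iterating the hypothesis. At each recursion node I peel off the separator and assign final colours to its vertices from a palette attached to that node's depth. To make the global colouring nonrepetitive, each colour is a triple encoding (i) the depth at which the vertex is coloured, (ii) a value from a single pre-computed $4$-colouring of $G$ from Lemma~\ref{Pattern}, and (iii) a position in the vertex's layer within its home separator. This gives $4c$ colour combinations per depth times a recursion depth of at most $1+\log_{1/(1-\epsilon)}n$, matching the stated bound.

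In detail, I would first apply Lemma~\ref{Pattern} once to the given layering to fix a $4$-colouring $\chi\colon V(G)\to\{1,2,3,4\}$ with the property that every $\chi$-repetitively-coloured path has matching layer patterns on its two halves. The recursion is then: at a node labelled $B$, invoke the hypothesis to obtain a separation $(G_1,G_2)$ whose separator $S:=V(G_1)\cap V(G_2)$ meets each layer of $B$ in at most $c$ vertices and whose pure sides each contain at most $(1-\epsilon)|B|$ vertices of $B$. For each layer $V_i$, arbitrarily enumerate the vertices of $S\cap B\cap V_i$ by positions in $\{1,\dots,c\}$, and colour each $v\in S\cap B$ by the triple $(\ell,\chi(v),\mathrm{pos}(v))$, where $\ell$ is the depth of the node. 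Recurse on the two child labels $B\cap(V(G_1)-V(G_2))$ and $B\cap(V(G_2)-V(G_1))$. Each shrinks by a factor of at least $1-\epsilon$, so the tree has depth at most $L:=1+\log_{1/(1-\epsilon)}n$ and the total palette size is $4cL$.

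For nonrepetitiveness, let $P=(v_1,\dots,v_{2t})$ be any repetitively coloured path. Matching middle coordinates gives $\chi(v_i)=\chi(v_{t+i})$ for all $i$, and Lemma~\ref{Pattern} then forces $v_i$ and $v_{t+i}$ into a common layer. Matching first coordinates puts them at a common depth. Let $\ell$ be the minimum depth of any vertex of $P$. The main obstacle, to be proved by induction on depth, is that all vertices of $P$ lie in the label of a single depth-$\ell$ recursion node, so that their level-$\ell$ home separator is common: at every shallower node the path avoids the separator by minimality of $\ell$, and since $(G_1,G_2)$ is a separation there is no edge between $V(G_1)-V(G_2)$ and $V(G_2)-V(G_1)$, which forces the connected path $P$ to lie in one pure side at each step. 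Once this confinement is in hand, any pair $v_i,v_{t+i}$ at depth $\ell$ shares a separator, a layer, and a position coordinate, hence $v_i=v_{t+i}$, contradicting simplicity of $P$ because $t\geq 1$. The rest is routine bookkeeping of the palette size.
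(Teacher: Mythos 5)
Your proof is correct and takes essentially the same approach as the paper: the same recursive decomposition driven by the separation hypothesis, the same triple colouring $(\textrm{depth},\chi,\textrm{position})$, and the same appeal to Lemma~\ref{Pattern}. The only variation is in how the final contradiction is extracted: the paper picks the minimum-depth pair $v_i,v_{t+i}$, passes to the least common ancestor $z$ of the two nodes that labelled them, and finds a vertex $v_j$ of $P$ between them lying in $z$'s separator with $\depth(v_j)<\depth(v_i)$, whereas you confine the entire path $P$ to a single depth-$\ell$ node by a top-down induction (using that $P$ is connected and avoids every shallower separator) and then conclude $v_i=v_{t+i}$ from injectivity of the position coordinate within one separator and one layer --- both routes are sound and give the same bound.
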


\begin{proof}
  Run the following recursive algorithm \textsc{Compute}$(V(G),1)$.

\begin{center}
  \framebox{
    \begin{minipage}{\textwidth-5mm}
      \textsc{Compute}$(B,d)$

      \begin{enumerate}
      \item If $B=\emptyset$ then exit.
      \item Let $(G_1,G_2)$ be a separation of $G$ such that each
        layer $V_i$ contains at most $c$ vertices in $V(G_1)\cap
        V(G_2)\cap B$, and both $V(G_1)-V(G_2)$ and $V(G_2)-V(G_1)$
        contain at most $(1-\epsilon)|B|$ vertices in $B$.
      \item Let $\depth(v):=d$ for each vertex $v\in V(G_1)\cap
        V(G_2)\cap B$.
      \item For $i\in[1,p]$, injectively label the vertices in
        $V_i\cap V(G_1)\cap V(G_2)\cap B$ by $1,2,\dots,c$.\\ Let
        $\textup{label}(v)$ be the label assigned to each vertex $v\in
        V_i\cap V(G_1)\cap V(G_2)\cap B$.
      \item\textsc{Compute}$((V(G_1)-V(G_2))\cap B,d+1)$
      \item \textsc{Compute}$((V(G_2)-V(G_1))\cap B,d+1)$ \smallskip
      \end{enumerate}

    \end{minipage}}
\end{center}

The recursive application of \textsc{Compute} determines a rooted
binary tree $T$, where each node of $T$ corresponds to one call to
\textsc{Compute}. Associate each vertex whose depth and label is
computed in a particular call to \textsc{Compute} with the
corresponding node of $T$. (Observe that the depth and label of each
vertex is determined exactly once.)

Colour each vertex $v$ by $(\blah(v),\depth(v),\textup{label}(v))$,
where $\blah$ is the 4-colouring from Lemma~\ref{Pattern}. Suppose on
the contrary that $(v_1,v_2,\dots,v_{2t})$ is a repetitively coloured
path in $G$. By Lemma~\ref{Pattern}, $(v_1,v_2,\dots,v_{t})$ and
$(v_{t+1},v_{t+2},\dots,v_{2t})$ have the same layer pattern.  In
addition, $\depth(v_i)=\depth(v_{t+i})$ and
$\textup{label}(v_{i})=\textup{label}(v_{t+i})$ for all $i\in[1,t]$.
Let $v_i$ and $v_{t+i}$ be vertices in this path with minimum depth.
Since $v_i$ and $v_{t+i}$ are in the same layer and have the same
label, these two vertices were not labelled at the same step of the
algorithm.  Let $x$ and $y$ be the two nodes of $T$ respectively
associated with $v_i$ and $v_{t+i}$. Let $z$ be the least common
ancestor of $x$ and $y$ in $T$. Say node $z$ corresponds to call
\textsc{Compute}$(B,d)$. Thus $v_i$ and $v_{t+i}$ are in $B$ (since if
a vertex $v$ is in $B$ in the call to $\textsc{Compute}$ associated
with some node $q$ of $T$, then $v$ is in $B$ in the call to
$\textsc{Compute}$ associated with each ancestor of $q$ in $T$).  Let
$(G_1,G_2)$ be the separation in \textsc{Compute}$(B,d)$. Since
$\depth(v_i)=\depth(v_{t+i})>d$, neither $v_i$ nor $v_{t+i}$ are in
$V(G_1)\cap V(G_2)$. Since $z$ is the least common ancestor of $x$ and
$y$, without loss of generality, $v_i\in V(G_1)-V(G_2)$ and
$v_{t+i}\in V(G_2)-V(G_1)$.
Thus some vertex $v_j$ in the subpath
$(v_{i+1},v_{i+2},\dots,v_{t+i-1})$ is in $V(G_1)\cap V(G_2)$. If
$v_j\in B$ then $\depth(v_j)=d$. If $v_j\not\in B$ then
$\depth(v_j)<d$. In both cases,
$\depth(v_j)<\depth(v_i)=\depth(v_{t+i})$, which contradicts the
choice of $v_i$ and $v_{t+i}$. Hence there is no repetitively coloured
path in $G$.

Observe that the maximum depth is at most
$1+\log_{1/(1-\epsilon)}n$. Therefore the number of colours is at most
$4c(1+\log_{1/(1-\epsilon)}n)$.
\end{proof}


We now prove that the condition in Lemma~\ref{General} holds for
\emph{plane triangulations}; that is, embedded planar graphs in which
every face is a triangle. If $r$ is a vertex of a connected graph $G$
and $V_i$ is the set of vertices in $G$ at distance $i$ from $r$, then
$V_0,V_1,V_2,\dots$ is a layering of $G$, called the \emph{layering
  starting} at $r$. Observe that for each vertex $v\in V_i$ there is a
$vr$-path that contains exactly one vertex from each layer
$V_0,V_1,\dots,V_i$; we call this a \emph{monotone} path.

\begin{lemma}
  \label{PlanarSeparation}
  Let $r$ be a vertex in a plane triangulation $G$.  Let
  $V_0,V_1,\dots,V_p$ be the layering of $G$ starting at $r$.  For
  every set $B\subseteq V(G)$, there is a separation $(G_1,G_2)$ of
  $G$ such that:

  \vspace*{-3ex} \begin{itemize}
  \item each layer $V_i$ contains at most two vertices in $V(G_1)\cap
    V(G_2)\cap B$,
  \item both $V(G_1)-V(G_2)$ and $V(G_2)-V(G_1)$ contain at most
    $\frac{2}{3}|B|$ vertices in $B$.
  \end{itemize}
\end{lemma}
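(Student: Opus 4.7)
The plan is to apply the classical fundamental-cycle separator technique, adapted to the BFS layering. I would fix a BFS spanning tree $T$ of $G$ rooted at $r$, so that the levels of $T$ coincide with the given layering $V_0,V_1,\dots,V_p$. For any non-tree edge $e=xy$, the fundamental cycle $C_e$ of $e$ in $T+e$ consists of $e$ together with the unique $xy$-path in $T$. This tree path splits at $\mathrm{lca}(x,y)$ into two monotone paths (from $x$ up to $\mathrm{lca}(x,y)$, and from $\mathrm{lca}(x,y)$ down to $y$), so every layer $V_i$ meets $V(C_e)$ in at most two vertices. Taking $G_1$ to be the subgraph of $G$ induced by $V(C_e)$ together with the vertices strictly inside the Jordan curve $C_e$, and $G_2$ the subgraph induced by $V(C_e)$ together with the vertices strictly outside, gives a separation of $G$ with $V(G_1)\cap V(G_2)=V(C_e)$; the first bullet of the lemma is then automatic. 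The whole task reduces to choosing $e$ so that both strict sides of $C_e$ contain at most $\tfrac{2}{3}|B|$ elements of $B$.

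To find such a non-tree edge, I would work in the planar dual. Because $G$ is a triangulation, the non-tree edges of $G$, viewed as edges of $G^*$, form a spanning tree $T^*$ of $G^*$; removing the dual edge corresponding to a non-tree edge $e$ splits $T^*$ into two face-subtrees, one for each side of $C_e$. Call $e$ \emph{balanced} if both strict sides of $C_e$ have at most $\tfrac{2}{3}|B|$ elements of $B$, and \emph{heavy} otherwise --- in the latter case $e$ has a unique heavy side (strict $B$-weight exceeding $\tfrac{2}{3}|B|$) and a unique light side (strict $B$-weight below $\tfrac{1}{3}|B|$). If no balanced non-tree edge exists, I would orient each edge of $T^*$ toward its heavy side. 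Since $T^*$ is a finite tree, this orientation has a sink face $f$, and every non-tree edge on the boundary of $f$ is oriented into $f$, so $f$ lies inside the heavy side of each such edge. Because $G$ is a triangulation, $f$ has at most three non-tree edges on its boundary. Case-analyzing this number $k\in\{1,2,3\}$ delivers the contradiction: for $k=1$ the heavy-side strict interior is the interior of a single triangular face, which is empty; for $k\in\{2,3\}$ the heavy-side face-subtrees around $f$ are nearly disjoint in $T^*$, and summing their strict $B$-weights (each $>\tfrac{2}{3}|B|$) overshoots $|B|$ after accounting for the at most three vertices of $V(f)$ and the few $B$-vertices on the relevant fundamental cycles.

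\textbf{Main obstacle.} The delicate step is the bookkeeping at the sink $f$: a vertex $v\notin V(f)$ can lie in the strict interior of two different heavy sides only if $v$ lies on one of the $O(1)$ fundamental cycles associated with non-tree edges of $f$, and making this precise so that the overlap is provably $o(|B|)$ requires care. A cleaner fall-back is an iterative argument: starting from any non-tree edge $e_0$, if it is not balanced, walk in $T^*$ from $e_0$ across the face on its heavy side to a different non-tree edge $e_1$ of that face; the heavy-side face-subtree of $T^*$ strictly shrinks (it loses at least the face just crossed), so the process terminates at a balanced edge after finitely many steps. The small cases where $|B|$ is bounded by a constant can be handled separately by placing all of $B$ on the separator directly.
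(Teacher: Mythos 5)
Your route via fundamental cycles of a BFS tree is a classical alternative to the paper's argument and is sound in outline: the BFS tree path from $x$ up to $\mathrm{lca}(x,y)$ and down to $y$ is the concatenation of two monotone paths, so it meets each layer at most twice, and $V(G_1)\cap V(G_2)=V(C_e)$ gives the first bullet with $c=2$. The paper instead works with ``lollipops'' (pairs of monotone paths from $r$, possibly plus one extra edge) and finds a balanced one by an extremal choice plus a case analysis on the face sitting on top of the lollipop; your dual-tree argument is a genuinely different mechanism for the balance step.

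However, the balance step --- the heart of the lemma --- is left with a real gap, and you acknowledge it (``requires care''). Two specific problems.

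First, in the sink argument, the assertion that for $k\in\{2,3\}$ the ``heavy-side face-subtrees around $f$ are nearly disjoint'' is false when $k=3$: the heavy side of $e_i$ consists of the faces $\{f\}\cup\bigcup_{j\ne i}L_j^*$, so any two of these heavy face-sets share an entire $L_j^*$, which can carry a lot of $B$-weight. Thus ``summing strict $B$-weights overshoots $|B|$'' does not follow. The correct finish is an intersection argument rather than a disjointness argument: letting $H_i$ denote the strict interior of $C_{e_i}$ on the $f$-side, one shows $H_1\cap\dots\cap H_k=\emptyset$. Indeed, a vertex $v$ in all the $H_i$ is on none of the $C_{e_i}$ and all its incident faces lie in $\bigcap_i\bigl(\{f\}\cup\bigcup_{j\ne i}L_j^*\bigr)=\{f\}$, impossible for a vertex of a triangulation (incident to $\geq 3$ faces); and for $k\leq 3$ the vertices of $f$ are ruled out separately since each lies on at least one $C_{e_i}$. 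On the other hand the Bonferroni bound gives $|H_1\cap\dots\cap H_k\cap B|\geq\sum_i|H_i\cap B|-(k-1)|B|>k\cdot\tfrac23|B|-(k-1)|B|\geq 0$, a contradiction. You did not carry out this reasoning, and the version you sketched does not work as stated.

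Second, the fallback iterative argument is not sound as written. When you step from $e_0$ across $g$ to another non-tree edge $e_1$ of $g$, the heavy side of $e_1$ need not be the component $B_1$ of $T^*-e_1$ away from $g$. Writing $T^*-g$ as a disjoint union of components $C_0,C_1,\dots$ attached along $e_0,e_1,\dots$, the two sides of $e_1$ are $B_1=C_1$ and $A_1=\{g\}\cup C_0\cup C_2\cup\dots$; the heavy side can perfectly well be $A_1$, which contains $g$ and $C_0$ and is in general \emph{not} a proper subtree of the heavy side $A_0=\{g\}\cup C_1\cup C_2\cup\dots$ of $e_0$. So ``the heavy-side face-subtree strictly shrinks'' is unjustified, and termination of the walk is not established. (This can be repaired --- e.g., by choosing $e_1$ so that $C_1$ carries maximal $B$-weight among the $C_i$ with $i\geq 1$, which forces the $B_1$ side to carry $>\tfrac13|B|$ and hence to be either balanced or the heavy side --- but that argument is not what you wrote.)

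So: correct plan, correct handling of the ``two per layer'' requirement, but the existence of a balanced fundamental cycle is not actually proved.
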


\begin{proof}
  If $|B|\leq2$ then $G_1:=G_2:=G$ satisfy the claim. Now assume that
  $|B|\geq3$.  A \emph{lollipop} $S$ of \emph{height} $k$ is a walk in
  $G$ such that: \vspace*{-2ex}
  \begin{itemize}
  \item either
    $S=(u_0,u_1,\dots,u_{k-1},u_k,v_k,v_{k-1},\dots,v_1,v_0)$ as in
    Figure~\ref{Lollipop}(a), \\or
    $S=(u_0,u_1,\dots,u_{k-1},u_k,u_{k+1},v_k,v_{k-1},\dots,v_1,v_0)$
    as in
    Figure~\ref{Lollipop}(b),\\
    where $u_i,v_i\in V_i$ for each $i\in[1,k]$, and $u_{k+1}\in
    V_{k+1}$;
  \item $u_0=v_0=r$ and $u_k\neq v_k$; and
  \item if $u_i=v_i$ for some $i\in[1,k-1]$, then $u_j=v_j$ for each
    $j\in[0,i]$.
  \end{itemize}

  \begin{figure}[htb]
    \begin{center}
      \includegraphics[width=150mm]{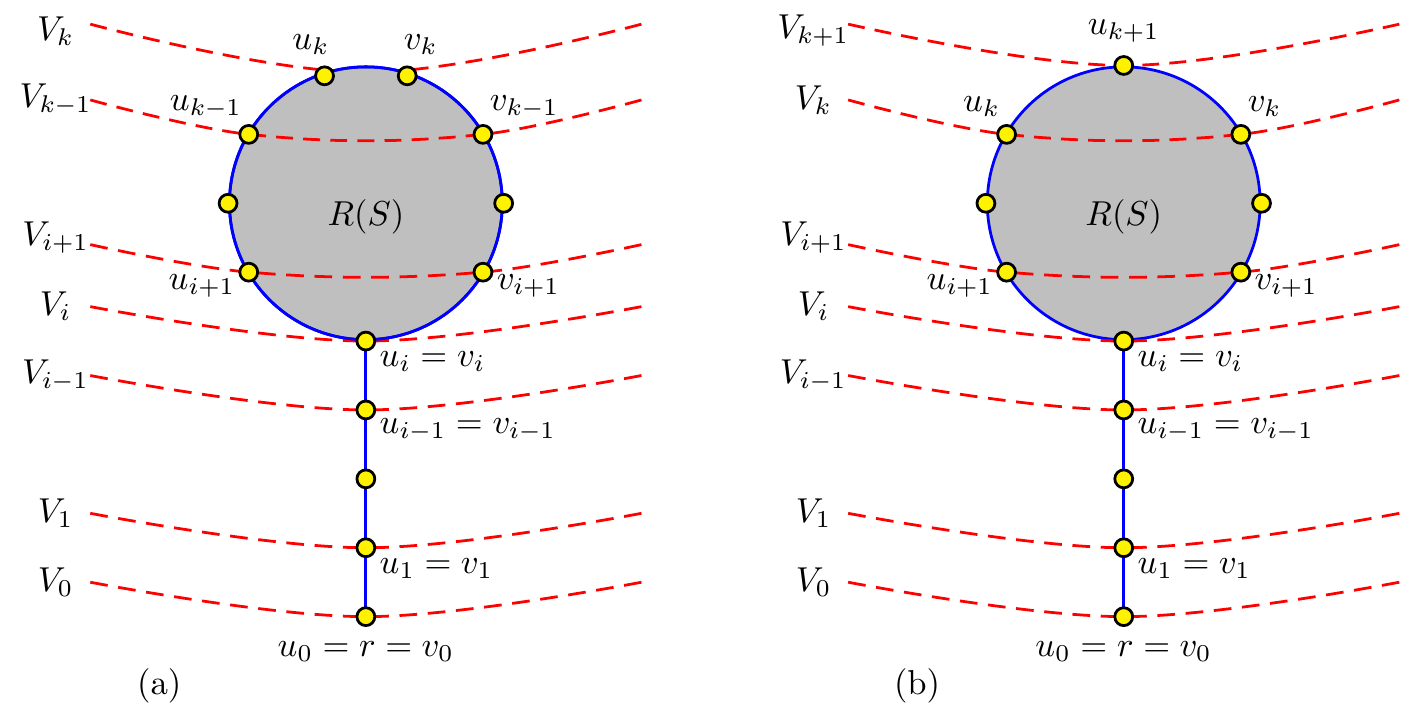}
      \caption{Two lollipops of height $k$. Note that the layers might
        have a more complicated structure than that shown
        here.\label{Lollipop}}
    \end{center}
  \end{figure}





  Consider a lollipop $S$. We define vertices to the right and left of
  $S$ as follows.  Let $i\geq 0$ be the maximum index for which
  $u_i=v_i$.  Let $C_S$ be the cycle obtained from $S$ by removing
  $u_0,u_1,\dots,u_{i-1}$ ($=v_0,v_1,\dots,v_{i-1}$). Then $w$ is
  \emph{to the right} of $S$ if it is to the right of $C_S$ when
  traversing $C_S$ so that vertex $u_{i+1}$ is visited immediately
  after vertex $u_i$. A vertex $w$ of $G$ is \emph{to the left} of $S$
  if it is neither to the right of $S$ nor a vertex of $S$. For the
  given set $B\subseteq V(G)$, let $R_B(S)$ and $L_B(S)$ be the sets
  of vertices in $B$ to the right and left of $S$, respectively. Let
  $r_B(S):=|R_B(S)|$ and $\ell_B(S):=|L_B(S)|$. We drop the subscript
  $B$ when $B=V(G)$.  By the Jordan Curve Theorem, $R_B(S)$ and
  $L_B(S)$ are disjoint.  Note that the reverse sequence
  $\overleftarrow{S}$ is also a lollipop, and
  $L_B(S)=R_B(\overleftarrow{S})$ and $R_B(S)=L_B(\overleftarrow{S})$.


 Let $S$ be a lollipop such that: \vspace*{-2ex}
  \begin{enumerate}[(1)]
  \item $r_B(S)\leq \frac{2}{3}|B|$;
  \item subject to (1), $r_B(S)$ is maximum; and
  \item subject to (1) and (2), $\ell(S)$ is minimum.
  \item subject to (1), (2) and (3), $r(S)$ is maximum.
  \end{enumerate}

  A lollipop satisfying (1) always exists, since if $(r,u_1,v_1)$ is a
  face in clockwise order, then $S:=(u_0=r,u_1,v_1,v_0=r)$ is a
  lollipop of height $1$ with $r_B(S)=0\leq\frac{2}{3}|B|$. 

Say $S$  has height $k$.  Suppose, for the sake of contradiction, that
  $\ell_B(S)>\frac{2}{3}|B|$. Thus $R_B(S)<\frac{1}{3}|B|$.  We
  distinguish the following cases:

  \emph{Case~1.}
  $S=(u_0,u_1,\dots,u_{k-1},u_k,v_k,v_{k-1},\dots,v_1,v_0)$: Let $w$
  be the vertex that forms a face $f=(u_k,w,v_k)$ in clockwise
  order. By the definition of layering, $w\in V_{k+1}\cup V_k\cup
  V_{k-1}$.

  \emph{Case~1(a).} $w\in V_{k+1}$: Then
  $S':=(u_0,u_1,\dots,u_{k-1},u_k,w,v_k,v_{k-1},\dots,v_1,v_0)$ is a
  lollipop. Since $f$ is a face, $R_B(S')=R_B(S)$ and
  $L(S')=L(S)-\{w\}$, contradicting (3).

  \emph{Case~1(b).} $w=u_{k-1}$: Observe that
  $S':=(u_0,u_1,\dots,u_{k-1},v_k,v_{k-1},\dots,v_1,v_0)$ is a
  lollipop of height $k-1$. We have $r_B(S')\leq
  r_B(S)+1<\frac{1}{3}|B|+1\leq \frac{2}{3}|B|$ (since $|B|\geq3$).
  Hence $S'$ satisfies (1).  If $u_k\in B$ then $r_B(S')>r_B(S)$,
  contradicting (2). Now assume that $u_k\not\in B$. We have
  $\ell(S')=\ell(S)$ and $r(S')=r(S)+1$, contradicting (4).

  \emph{Case~1(c).} $w=v_{k-1}$: This case is analogous to Case~1(b)
  except that we use
  $S':=(u_0,u_1,\dots,u_{k-1},u_k,v_{k-1},\dots,v_1,v_0)$.

  \emph{Case~1(d).} $w\in V_{k-1}\-\{u_{k-1},v_{k-1}\}$: There
  is a monotone path $P=(w=z_{k-1},z_{k-2},\dots,z_0=r)$ such that if
  some $z_i=u_i$ then $z_j=u_j$ for each $j\in[0,i]$, and if some
  $z_i=v_i$ then $z_j=v_j$ for each $j\in[0,i]$. Observe that both
  $S':=(u_0,u_1,\dots,u_{k-1},u_k,z_{k-1},z_{k-2},\dots,z_1,z_0)$ and
  $S'':=(z_0,z_1,\dots,z_{k-1},v_k,v_{k-1},\dots,v_1,v_0)$ are
  lollipops. By assumption,
  $r_B(\overleftarrow{S})=\ell_B(S)>\frac{2}{3}|B|$.  We have
  $r_B(S')>\frac{2}{3}|B|$ since $S'$ is a lollipop with $r_B(S')\geq
  r_B(S)$ and $\ell(S')<\ell(S)$.  Similarly,
  $r_B(S'')>\frac{2}{3}|B|$. Hence
  $r_B(\overleftarrow{S})+r_B(S')+r_B(S'')>2|B|$.  Thus
  $R_B(\overleftarrow{S})\cap R_B(S')\cap R_B(S'')\neq\emptyset$,
  which is a contradiction since $R_B(S')\cap R_B(S'')\subseteq
  R_B(S)=L_B(\overleftarrow{S})$.

  \emph{Case~1(e).} $w\in V_{k}$: This case is analogous to Case~1(d),
  except that here $P$ is a monotone path
  $(w=z_{k},z_{k-1},\dots,z_0=r)$, and
  $S':=(u_0,u_1,\dots,u_{k-1},u_k,z_{k},z_{k-1},\dots,z_1,z_0)$ and
  $S'':=(z_0,z_1,\dots,z_{k-1},z_{k},v_k,v_{k-1},\dots,v_1,v_0)$.

  \emph{Case~2.}
  $S=(u_0,u_1,\dots,u_{k-1},u_k,u_{k+1},v_k,v_{k-1},\dots,v_1,v_0)$:
  Let $w$ be the vertex that forms a face $f=(u_k,w,u_{k+1})$ in
  clockwise order. Hence $w\in V_{k+1} \cup V_k$.

  \emph{Case~2(a).} $w\in V_{k+1}$: This case is analogous to Case~1(a) except that 
  $S':=(u_0,u_1,\dots,u_{k-1},u_k,w,u_{k+1},v_k,v_{k-1},\dots,v_1,v_0)$.

  \emph{Case~2(b).} $w=v_k$: This case is analogous to Case~1(c)
  except with
  $S':=(u_0,u_1,\dots,u_{k-1},u_k,v_k,v_{k-1},\dots,v_1,v_0)$.

  \emph{Case~2(c).} $w\in V_{k}-\{v_k\}$: This case is
  analogous to Case~1(d), except that here $P$ is a monotone path
  $(w=z_{k},z_{k-1},\dots,z_0)$, and
  $S':=(u_0,u_1,\dots,u_{k-1},u_k,z_{k},z_{k-1},\dots,z_1,z_0)$ and
  $S'':=(z_0,z_1,\dots,z_{k-1},z_k,u_{k+1},v_k,v_{k-1},\dots,v_1,v_0)$.

  Each case leads to a contradiction. Hence $r_B(S)\leq\frac{2}{3}|B|$
  and $\ell_B(S)\leq\frac{2}{3}|B|$.  Let $G_1$ be the subgraph
  induced by the vertices in $S$ and to the right of $S$.  Let $G_2$
  be the subgraph induced by the vertices in $S$ and to the left of
  $S$. By the Jordan Curve Theorem, no vertex to the right of $S$ is
  adjacent to a vertex to the left of $S$. Hence $G=G_1\cup G_2$ and
  $(G_1,G_2)$ is the desired separation.
\end{proof}

Lemmas~\ref{General} and \ref{PlanarSeparation} together prove
Theorem~\ref{NonRepPlanar} (since every planar graph with at least
four vertices is a spanning
subgraph of a plane triangulation).

\section{Proof of Theorem~\ref{LocalPlanar}}

Theorem~\ref{LocalPlanar} is a special case of the following result
with $H=K_5$ or $H=K_{3,3}$. A graph $H$ is \emph{apex} if $H-v$ is planar for some
vertex $v$ of $H$.

\begin{theorem}
  \label{thm:ExcludeApex}
  For every fixed apex graph $H$ there is a constant $c=c(H)$ such
  that, for every integer $k$, every $H$-minor-free graph $G$ is
  $c^{k^2}$-colourable such that $G$ contains no repetitively coloured
  path of order at most $2k$.
\end{theorem}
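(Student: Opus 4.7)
The plan is to combine three ingredients: Lemma~\ref{Pattern}, the cited fact that treewidth-$t$ graphs are nonrepetitively $4^t$-colourable, and the classical theorem that apex-minor-free graph classes have \emph{linear local treewidth}---the ball of radius $r$ around any vertex induces a subgraph of treewidth $O_H(r)$. The guiding idea is that Lemma~\ref{Pattern} squeezes any repetitively coloured path of order at most $2k$ into a narrow window of BFS layers; the apex hypothesis ensures such a window has treewidth $O_H(k)$; and a nonrepetitive treewidth colouring of the window then forbids repetition.

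I would start by assuming $G$ is connected (otherwise colour the components independently) and fixing a BFS layering $V_0,V_1,\dots$ from an arbitrary root. Lemma~\ref{Pattern} yields a $4$-colouring $\phi$ such that any repetitively coloured path $(v_1,\dots,v_{2t})$ has matching layer patterns on its two halves. For $2t\le 2k$ this forces both halves to visit the same layers, which must lie in some range $[a,b]$ with $b-a\le t-1\le k-1$; the whole path is therefore confined to at most $k$ consecutive layers.

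Next I would show that any $2k$ consecutive layers $V_i\cup\cdots\cup V_{i+2k-1}$ induce a subgraph of treewidth $O_H(k)$. Since $V_0\cup\cdots\cup V_{i-1}$ is connected in $G$, contracting it to a single vertex $r'$ produces an $H$-minor-free minor $G'$ of $G$ in which $V_i\cup\cdots\cup V_{i+2k-1}$ sits inside the ball of radius $2k$ around $r'$; applying linear local treewidth to $G'$ gives the bound. The window therefore admits a nonrepetitive colouring with $4^{O_H(k)}$ colours. To handle paths that cross a window boundary, I would use two partitions of the layer indices into blocks of length $2k$, one aligned at $0$ and one shifted by $k$; a short arithmetic check confirms that every range of at most $k$ consecutive integers lies inside a block of at least one partition.

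I would then nonrepetitively colour every block of each partition as above, reusing a single $4^{O_H(k)}$-size palette across the blocks of a given partition (only block-internal nonrepetition is needed), and call the two resulting colourings $\psi_1,\psi_2$. The final colouring is $v\mapsto(\phi(v),\psi_1(v),\psi_2(v))$, using at most $4\cdot 4^{O_H(k)}\cdot 4^{O_H(k)}\le c^{k^2}$ colours. If some path of order $\le 2k$ were repetitively coloured, Lemma~\ref{Pattern} together with $\phi$ would force it into $\le k$ consecutive layers, hence into a single $2k$-block of one of the two partitions, contradicting the nonrepetitiveness of $\psi_1$ or $\psi_2$ inside that block. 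The only substantive step is the bounded-treewidth claim for $2k$ consecutive layers: this is precisely where the apex hypothesis is used (for non-apex $H$ the local treewidth can be unbounded and the argument breaks down), and the contraction trick is what reduces our statement to the standard local treewidth theorem. Everything else is routine bookkeeping; note that the resulting $4^{O_H(k)}$ bound is substantially sharper than the $c^{k^2}$ that the theorem asks for.
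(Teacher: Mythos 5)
Your proof is correct, and it takes a genuinely different route from the paper's---and in fact yields a sharper bound. The paper does \emph{not} invoke Lemma~\ref{Pattern} in this proof at all. Instead it observes that any path of order $\le 2k$ trivially spans at most $2k$ consecutive layers, so it lies in at least one of the windows $G_i := G[V_i\cup\cdots\cup V_{i+2k-1}]$. It then forms, for each vertex $v$, a full $2k$-tuple $(\phi_0(v),\dots,\phi_{2k-1}(v))$ where $\phi_j(v)$ records the colour of $v$ in the unique window $G_i$ with $i\equiv j\pmod{2k}$ that contains $v$; since a bad path is contained in some $G_i$, the $j$-th coordinate (with $j=i\bmod 2k$) already gives the contradiction. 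This costs $(4^{2ck})^{2k}$ colours, hence $c^{k^2}$. You instead use Lemma~\ref{Pattern} to force the two halves of a bad path onto the \emph{same} layer sequence (this is indeed how the paper itself reads Lemma~\ref{Pattern} when proving Lemma~\ref{General}), which confines the path to at most $k$ consecutive layers rather than $2k$. With that tighter confinement you only need two shifted block partitions of the layers rather than $2k$ shifted window families, so the colouring is a triple $(\phi,\psi_1,\psi_2)$ of size $4\cdot 4^{2ck}\cdot 4^{2ck}=4^{O_H(k)}$, which is singly exponential in $k$ rather than in $k^2$. The shared kernel of both proofs is the contraction trick (collapsing $V_0\cup\cdots\cup V_{i-1}$ to a root to put a $2k$-layer window inside a radius-$2k$ ball of an $H$-minor-free minor) combined with linear local treewidth for apex-minor-free classes and the $4^{\mathrm{tw}}$ nonrepetitive bound of K\"undgen--Pelsmajer; the difference is purely in how the windows are indexed and combined. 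Your observation that the bound can be brought down to $c^k$ is a real improvement on the theorem as stated, and is worth making explicit.
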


\begin{proof}
  \citet{Eppstein-Algo00} proved that for some function $f$ (depending
  on $H$), for every $H$-minor-free graph $G$, for every vertex $r$ of
  $G$, and for every integer $\ell\geq0$, the set of vertices in $G$
  at distance at most $\ell$ from $r$ induces a subgraph of treewidth
  at most $f(\ell)$. This is called the \emph{diameter-treewidth} or
  \emph{bounded local treewidth} property; also see
  \citep{DH-SJDM04,DH-Algo04,Grohe-Comb03}.  \citet{DH-SODA04}
  strengthened Eppstein's result by showing that one can take
  $f(\ell)=c\ell$ for some constant $c=c(H)$.

  Let $G$ be an $H$-minor-free graph. By considering each connected
  component in turn, we may assume that $G$ is connected.  Let $r$ be
  a vertex of $G$.  Let $V_0,V_1,\dots,V_p$ be the layering of $G$
  starting at some vertex $r$ of $G$. Fix an integer $k\geq 1$.  For
  $i\in[1,p]$, let $G_i:=G[V_i\cup V_{i+1}\cup\dots\cup
  V_{\min\{p,i+2k-1\}}]$, and let $G'_i$ be the minor of $G$ obtained
  by contracting the connected subgraph $G[V_0\cup V_1\cup\dots\cup
  V_{i-1}]$ into a single vertex $r_i$. Thus $G_i'$ is an
  $H$-minor-free graph containing $G_i$ as a subgraph, and each vertex
  in $G_i$ is at distance at most $2k$ from $r_i$ in $G'_i$. By the
  diameter-treewidth property, $G_i$ has treewidth at most $2ck$. By a
  theorem of \citet{KP-DM08}, there is a nonrepetitive
  $4^{2ck}$-colouring $\psi_i$ of $G_i$.

  For each vertex $v$ of $G$, define
  $\psi(v):=(\phi_0(v),\phi_1(v),\dots,\phi_{2k-1}(v))$, where
  $\phi_j(v):=\psi_i(v)$ and $i$ is the unique integer for which
  $i\equiv j\pmod{2k}$ and $v\in V(G_i)$.  Suppose on the contrary
  that $G$ contains a repetitively coloured path
  $P=(v_1,\dots,v_{2t})$ of order at most $2k$ (under the colouring
  $\psi$).  Thus $P$ is contained in some $G_i$. Let $j:=i\bmod{2k}$.
  Hence $\psi_i(v_a)=\phi_j(v_a)=\phi_j(v_{t+a})=\psi_i(v_{t+a})$ for
  each $a\in[1,t]$. That is, $P$ is repetitively coloured by $\psi_i$
  in the colouring of $G_i$. This contradiction proves that $G$
  contains no repetitively coloured path under $\psi$. The number of
  colours is $(4^{2ck})^{2k}=(4^{4c})^{k^2}$.
\end{proof}

Graphs embeddable on a fixed surface exclude a fixed apex graph as a
minor \citep{Eppstein-Algo00}. Thus Theorem~\ref{thm:ExcludeApex}
implies:

\begin{corollary}
  For every fixed surface $\Sigma$ there is a constant $c=c(\Sigma)$
  such that, for every integer $k\geq 1$, every graph $G$ embeddable
  in $\Sigma$ is $c^{k^2}$-colourable such that $G$ contains no
  repetitively coloured path of order at most $2k$.
\end{corollary}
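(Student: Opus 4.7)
The plan is to derive this corollary as an immediate consequence of Theorem~\ref{thm:ExcludeApex} by exhibiting, for each fixed surface $\Sigma$, a single apex graph that every graph embeddable in $\Sigma$ must avoid as a minor. Concretely, I would first invoke the result of \citet{Eppstein-Algo00} cited in the paragraph preceding the corollary: for every surface $\Sigma$ there exists an apex graph $H=H(\Sigma)$ such that no graph embeddable in $\Sigma$ contains $H$ as a minor. (Intuitively, $H$ is built by taking a sufficiently large planar graph that cannot embed in $\Sigma$ together with an apex vertex joined to everything; since $H$ is not itself embeddable in $\Sigma$ it cannot appear as a minor of any graph embeddable in $\Sigma$.)

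Having fixed such an apex graph $H=H(\Sigma)$, I would set $c:=c(H)$ where $c(H)$ is the constant promised by Theorem~\ref{thm:ExcludeApex}. Given any integer $k\geq 1$ and any graph $G$ embeddable in $\Sigma$, the graph $G$ is $H$-minor-free by the choice of $H$, so Theorem~\ref{thm:ExcludeApex} produces a $c^{k^2}$-colouring of $G$ under which no path of order at most $2k$ is repetitively coloured. This is exactly what the corollary asserts, with $c=c(\Sigma):=c(H(\Sigma))$.

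There is essentially no obstacle beyond correctly quoting Eppstein's apex-minor result; the bulk of the work (the layering plus diameter-treewidth plus Kündgen--Pelsmajer argument) has already been done inside Theorem~\ref{thm:ExcludeApex}. The only thing to double-check is that the existence of the apex excluded minor is indeed for a \emph{fixed} apex graph depending only on $\Sigma$ (and not on $G$), which is precisely the content of Eppstein's theorem as it is cited in the paper.
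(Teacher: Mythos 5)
Your proposal is correct and follows exactly the paper's own one-line derivation: cite Eppstein's result that graphs embeddable in a fixed surface $\Sigma$ exclude some fixed apex graph $H(\Sigma)$ as a minor, and then apply Theorem~\ref{thm:ExcludeApex} with $c(\Sigma):=c(H(\Sigma))$. Nothing more is needed.
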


\section{Open Problems}

Our research suggests two open problems: \vspace*{-2ex}
\begin{enumerate}
\item Is $\pi(G)\in o(\log n)$ for every planar graph $G$ with $n$
  vertices?
\item Is there a polynomial function $f$ such that for every integer
  $k\geq 1$ every planar graph $G$ is $f(k)$-colourable such that $G$
  contains no repetitively coloured path of order at most $2k$?
\end{enumerate}

Finally, we mention a class of planar graphs that seem difficult to
nonrepetitively colour. Let $T$ be a tree rooted at a vertex $r$. Let
$V_i$ be the set of vertices in $T$ at distance $i$ from $r$.  Draw
$T$ in the plane with no crossings. Add a cycle on each $V_i$ in the
cyclic order defined by the drawing to create a planar graph $G_T$. It
is open whether $\pi(G_T)\leq c$ for some constant $c$ independent of
$T$. Note that this class of planar graphs includes examples with
unbounded degree and unbounded treewidth.


\def\soft#1{\leavevmode\setbox0=\hbox{h}\dimen7=\ht0\advance \dimen7
  by-1ex\relax\if t#1\relax\rlap{\raise.6\dimen7
  \hbox{\kern.3ex\char'47}}#1\relax\else\if T#1\relax
  \rlap{\raise.5\dimen7\hbox{\kern1.3ex\char'47}}#1\relax \else\if
  d#1\relax\rlap{\raise.5\dimen7\hbox{\kern.9ex \char'47}}#1\relax\else\if
  D#1\relax\rlap{\raise.5\dimen7 \hbox{\kern1.4ex\char'47}}#1\relax\else\if
  l#1\relax \rlap{\raise.5\dimen7\hbox{\kern.4ex\char'47}}#1\relax \else\if
  L#1\relax\rlap{\raise.5\dimen7\hbox{\kern.7ex
  \char'47}}#1\relax\else\message{accent \string\soft \space #1 not
  defined!}#1\relax\fi\fi\fi\fi\fi\fi}


\appendix\section{Lower Bounds}\label{LowerBound}

\citet{BV-NonRepVertex07} constructed a planar graph $G$ with
$\pi(G)\geq 10$. Pascal Ochem [private communication] observed that
this lower bound can be improved to $11$ by adapting a construction
due to \citet{Albertson-EJC04} as follows. \citet{BV-NonRepVertex07}
constructed an outerplanar graph $H$ with $\pi(H)\geq 7$. Let $G$ be
the following planar graph. Start with a path $P=(v_1,\dots,v_{22})$.
Add two adjacent vertices $x$ and $y$ that both dominate $P$. Let each
vertex $v_i$ in $P$ be adjacent to every vertex in a copy $H_i$ of
$H$. Suppose on the contrary that $G$ is nonrepetitively
$10$-colourable. Without loss of generality, $x$ and $y$ are
respectively coloured $1$ and $2$. A vertex in $P$ is \emph{redundant}
if its colour is used on some other vertex in $P$. If no two adjacent
vertices in $P$ are redundant then at least $11$ colours appear
exactly once on $P$, which is a contradiction. Thus some pair of
consecutive vertices $v_i$ and $v_{i+1}$ in $P$ are redundant. Without
loss of generality, $v_i$ and $v_{i+1}$ are respectively coloured $3$
and $4$. If some vertex in $H_i\cup H_{i+1}$ is coloured $1$ or $2$,
then since $v_i$ and $v_{i+1}$ are redundant, with $x$ or $y$ we have
a repetitively coloured path on 4 vertices. Now assume that no vertex
in $H_i\cup H_{i+1}$ is coloured $1$ or $2$. If some vertex in $H_i$
is coloured $4$ and some vertex in $H_{i+1}$ is coloured $3$, then
with $v_i$ and $v_{i+1}$, we have a repetitively coloured path on 4
vertices. Thus no vertex in $H_i$ is coloured $4$ or no vertex in
$H_{i+1}$ is coloured $3$. Without loss of generality, no vertex in
$H_i$ is coloured $4$. Since $v_i$ dominates $H_i$, no vertex in $H_i$
is coloured $3$. We have proved that no vertex in $H_i$ is coloured
$1,2,3$ or $4$, which is a contradiction, since $\pi(H_i)\geq
7$. Therefore $\pi(G)\geq 11$.

\end{document}